\documentclass[12pt,twoside]{amsart}
\usepackage{amsmath, amsthm, amscd, amsfonts, amssymb, graphicx}
\usepackage{enumerate}
\usepackage{mathrsfs}
\addtolength{\topmargin}{-1.5cm}
\linespread {1.3}
\textwidth 17cm
\textheight 23cm
\addtolength{\hoffset}{-0.3cm}
\oddsidemargin 0cm
\evensidemargin 0cm
\setcounter{page}{1}
\newtheorem{theorem}{Theorem}[section]
\newtheorem{lemma}{Lemma}[section]
\newtheorem{remark}{Remark}[section]

\newtheorem{corollary}{Corollary}[section]

\newtheorem{proposition}{Proposition}[section]
\setcounter{equation}{0}

\begin{document}
	\title{A glimpse at the operator Kantorovich inequality}
	\author{Hamid Reza Moradi$^{\dagger}$, IBRAHIM HALIL G\"uM\"U\c S$^{\ddagger}$ and Zahra Heydarbeygi$^{\S}$}
	\subjclass[2010]{Primary 47A63, Secondary 46L05, 47A60.}
	\keywords{Operator inequality, Kantorovich inequality, positive linear maps, log-convex functions.}
	
		 \maketitle
	\begin{abstract}
We show the following result: Let $A$ be a positive operator satisfying $0<m{{\mathbf{1}}_{\mathcal{H}}}\le A\le M{{\mathbf{1}}_{\mathcal{H}}}$  for some scalars $m,M$ with $m<M$ and $\Phi $ be a normalized positive linear map, then 
\[\Phi \left( {{A}^{-1}} \right)\le \Phi \left( {{m}^{\frac{A-M{{\mathbf{1}}_{\mathcal{H}}}}{M-m}}}{{M}^{\frac{m{{\mathbf{1}}_{\mathcal{H}}}-A}{M-m}}} \right)\le \frac{{{\left( M+m \right)}^{2}}}{4Mm}\Phi {{\left( A \right)}^{-1}}.\]
Besides, we prove that the second inequality in the above can be squared.
	\end{abstract}
\pagestyle{myheadings}
\markboth{\centerline {A glimpse at the operator Kantorovich inequality}}
{\centerline {H.R. Moradi, I.H. G\"um\"u\c s \& Z. Heydarbeygi}}
\bigskip
\bigskip
\section{\bf Introduction}
\vskip 0.4 true cm
In 1948, L.V. Kantorovich,  Soviet mathematician and economist, introduced the well-known Kantorovich inequality \cite{kantorovich}. Operator version of Kantorovich inequality was firstly established by A.W. Marshall and I. Olkin, who obtained:

\vskip 0.4 true cm
\noindent{\bf Theorem A.}
{\it 
	{\upshape(\cite{marshal})} Let $A$ be a positive operator satisfying $0<m{{\mathbf{1}}_{\mathcal{H}}}\le A\le M{{\mathbf{1}}_{\mathcal{H}}}$ for some scalars $m,M$ with $m<M$ and $\Phi $ be a normalized positive linear map. Then
	\begin{equation}\label{5}
	\Phi \left( {{A}^{-1}} \right)\le \frac{{{\left( M+m \right)}^{2}}}{4Mm}\Phi {{\left( A \right)}^{-1}}.
	\end{equation}
}
This note aims to present an improvement of inequality \eqref{5}. The main result of this note is of this genre:
\begin{theorem}\label{th1}
	Let all the assumptions of Theorem {\upshape A} hold. Then 
\begin{equation}\label{10}
\Phi \left( {{A}^{-1}} \right)\le \Phi \left( {{m}^{\frac{A-M{{\mathbf{1}}_{\mathcal{H}}}}{M-m}}}{{M}^{\frac{m{{\mathbf{1}}_{\mathcal{H}}}-A}{M-m}}} \right)\le \frac{{{\left( M+m \right)}^{2}}}{4Mm}\Phi {{\left( A \right)}^{-1}}.
\end{equation}	
\end{theorem}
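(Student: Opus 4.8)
The plan is to reduce both inequalities in \eqref{10} to scalar estimates on the interval $[m,M]$, to transport these through the continuous functional calculus of $A$, and finally to apply $\Phi$, which is order preserving and, being normalized, satisfies $\Phi(\mathbf{1}_{\mathcal{H}})=\mathbf{1}_{\mathcal{H}}$. It is convenient to set
\[
g(t):=m^{\frac{t-M}{M-m}}M^{\frac{m-t}{M-m}},\qquad t\in[m,M],
\]
so that the middle term of \eqref{10} is precisely $\Phi\left(g(A)\right)$. Two features of $g$ drive the argument: writing $t=(1-\lambda)m+\lambda M$ with $\lambda=\frac{t-m}{M-m}\in[0,1]$ one checks $g(t)=\left(\frac{1}{m}\right)^{1-\lambda}\left(\frac{1}{M}\right)^{\lambda}$, and $\log g$ is affine in $t$, so $g$ is convex on $[m,M]$ with $g(m)=\frac{1}{m}$ and $g(M)=\frac{1}{M}$.

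First I would establish the left inequality. The function $s\mapsto s^{-1}$ is log-convex on $(0,\infty)$ (since $-\log s$ is convex), hence $t^{-1}=\bigl((1-\lambda)m+\lambda M\bigr)^{-1}\le\left(\frac{1}{m}\right)^{1-\lambda}\left(\frac{1}{M}\right)^{\lambda}=g(t)$ for every $t\in[m,M]$. Because the spectrum of $A$ lies in $[m,M]$, the functional calculus gives $A^{-1}\le g(A)$, and applying the positive linear map $\Phi$ yields $\Phi\left(A^{-1}\right)\le\Phi\left(g(A)\right)$, which is the first inequality in \eqref{10}.

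Next I would prove the right inequality. Since $g$ is convex on $[m,M]$ and coincides at the endpoints with the secant line $L(t):=(1-\lambda)\frac{1}{m}+\lambda\frac{1}{M}=\frac{(M+m)-t}{Mm}$ of $s\mapsto s^{-1}$, we get $g(t)\le L(t)$ for $t\in[m,M]$; via the functional calculus and $\Phi$ this becomes $\Phi\left(g(A)\right)\le\frac{(M+m)\mathbf{1}_{\mathcal{H}}-\Phi(A)}{Mm}$. It then remains to show $\frac{(M+m)\mathbf{1}_{\mathcal{H}}-B}{Mm}\le\frac{(M+m)^{2}}{4Mm}B^{-1}$ where $B:=\Phi(A)$, which is positive and invertible because $A\ge m\mathbf{1}_{\mathcal{H}}$ forces $B\ge m\mathbf{1}_{\mathcal{H}}>0$. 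Multiplying by $Mm>0$, this is equivalent to $\frac{(M+m)^{2}}{4}B^{-1}-(M+m)\mathbf{1}_{\mathcal{H}}+B\ge 0$, and since $B$ commutes with $B-\frac{M+m}{2}\mathbf{1}_{\mathcal{H}}$ the left-hand side equals $B^{-1/2}\bigl(B-\tfrac{M+m}{2}\mathbf{1}_{\mathcal{H}}\bigr)^{2}B^{-1/2}\ge 0$. Chaining the two displays gives the right inequality, and \eqref{10} follows.

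The only conceptually delicate point is the first step: one must recognize the unwieldy operator $m^{\frac{A-M\mathbf{1}_{\mathcal{H}}}{M-m}}M^{\frac{m\mathbf{1}_{\mathcal{H}}-A}{M-m}}$ as the log-linear interpolant $g(A)$ of $s\mapsto s^{-1}$ between $m$ and $M$; once that is seen, the left inequality is exactly the log-convexity of $s\mapsto s^{-1}$ and the right inequality follows from the ordinary convexity of $g$ together with the elementary operator identity above. Everything else is routine functional calculus and positivity of $\Phi$; in particular, no operator-convexity theorem is needed.
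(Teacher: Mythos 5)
Your proof is correct. The first half is exactly the paper's argument: the chain $t^{-1}\le m^{\frac{t-M}{M-m}}M^{\frac{m-t}{M-m}}\le \frac{(M+m)-t}{Mm}$ is the specialization of the paper's inequality \eqref{c} (log-convexity followed by the arithmetic--geometric mean bound) to $f(t)=t^{-1}$, transported through the functional calculus and $\Phi$ just as in Proposition \ref{thb}. Where you genuinely diverge is the final step: the paper deduces $\Phi\left( L\left( A \right) \right)\le \mu\left( m,M,f \right)f\left( \Phi\left( A \right) \right)$ by invoking \cite[Corollary 4.12]{micic} for a general log-convex $f$ (and then evaluates $K(m,M,-1)=\frac{(M+m)^2}{4Mm}$), whereas you prove the needed special case by hand, showing $\frac{(M+m)\mathbf{1}_{\mathcal{H}}-B}{Mm}\le \frac{(M+m)^2}{4Mm}B^{-1}$ for $B=\Phi(A)>0$ via the identity
\[
\frac{(M+m)^2}{4}B^{-1}-(M+m)\mathbf{1}_{\mathcal{H}}+B=B^{-1/2}\Bigl(B-\tfrac{M+m}{2}\mathbf{1}_{\mathcal{H}}\Bigr)^{2}B^{-1/2}\ge 0,
\]
which is a correct completing-the-square argument. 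What your route buys is a fully self-contained, elementary proof of Theorem \ref{th1} that needs no Mond--Pe\v{c}ari\'{c} machinery and avoids the paper's remark about extending a matrix result to operators; what the paper's route buys is generality, since Proposition \ref{thb} and Corollary \ref{3.1} give the analogous refinement for every log-convex $f$ and every $p<0$, with Theorem \ref{th1} falling out as the case $p=-1$.
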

This is proven at the end of Section \ref{s2}. We start off by fixing some notation: Let $\mathbb{B}\left( \mathcal{H} \right)$ denote the set of all bounded linear operators on a complex Hilbert space $\mathcal{H}$ with the identity ${{\mathbf{1}}_{\mathcal{H}}}$. We extensively use the continuous functional calculus for self-adjoint operators, e.g., see \cite[p. 3]{book}. An operator $A$ on $\mathcal{H}$ is said to be {\it positive} (in symbol $0\le A$) if $0\le \left\langle Ax,x \right\rangle $ for all $x\in \mathcal{H}$. We write $0<A$ if $A$ is positive and invertible. For self-adjoint operators $A,B\in \mathbb{B}\left( \mathcal{H} \right)$, we say $A\le B$ if $0\le B-A$. A linear map $\Phi :\mathbb{B}\left( \mathcal{H} \right)\to \mathbb{B}\left( \mathcal{K} \right)$, where $\mathcal{H}$ and $\mathcal{K}$ are complex Hilbert spaces, is called {\it positive} if $\Phi \left( A \right)\ge 0$ whenever $A\ge 0$ and is said to be {\it normalized} if $\Phi \left( {{\mathbf{1}}_{\mathcal{H}}} \right)={{\mathbf{1}}_{\mathcal{K}}}$.

A positive function defined on the interval $I$ (or, more generally, on a convex subset of some vector space) is called  {\it $log $-convex} if $\log f\left( x \right)$ is a convex function of $x$. We observe that such functions satisfy the elementary inequality
\begin{equation*}
f\left( \left( 1-v \right)a+vb \right)\le {{\left[ f\left( a \right) \right]}^{1-v}}{{\left[ f\left( b \right) \right]}^{v}},\qquad \text{ }0\le v\le 1
\end{equation*}
for any $a,b\in I$. Because of the weighted arithmetic-geometric mean inequality, we also have
\begin{equation}\label{b}
f\left( \left( 1-v \right)a+vb \right)\le {{\left[ f\left( a \right) \right]}^{1-v}}{{\left[ f\left( b \right) \right]}^{v}}\le \left( 1-v \right)f\left( a \right)+vf\left( b \right),
\end{equation}
which says that any log-convex function is a convex function. 

\medskip

The following inequality is well known in the literature as the Choi-Davis-Jensen inequality:
\vskip 0.4 true cm

\noindent{\bf Theorem B.}
(\cite{choi, davis}) {\it Let $A\in \mathcal{B}\left( \mathcal{H} \right)$ be a self-adjoint operator with spectrum $Sp\left( A \right)\subseteq I$ and $\Phi $ be a normalized positive linear map from $\mathbb{B}\left( \mathcal{H} \right)$ to $\mathbb{B}\left( \mathcal{K} \right)$. If $f$ is operator convex function on an interval $I$, then
\begin{equation}\label{cdj}
f\left( \Phi \left( A \right) \right)\le \Phi \left( f\left( A \right) \right).
\end{equation}
}
\indent Though in the case of convex function the inequality \eqref{cdj} does not hold in general, we have the following estimate:
\vskip 0.4 true cm
\noindent{\bf Theorem C.}
{\it {\upshape(\cite[Remark 4.14]{micic})} Let $A\in \mathcal{B}\left( \mathcal{H} \right)$ be a self-adjoint operator with $Sp\left( A \right)\subseteq \left[ m,M \right]$ for some scalars $m,M$ with $m<M$ and $\Phi $ be a normalized positive linear map from $\mathbb{B}\left( \mathcal{H} \right)$ to $\mathbb{B}\left( \mathcal{K} \right)$. If $f$ is non-negative convex function, then
\begin{equation*}
\frac{1}{\mu \left( m,M,f \right)}\Phi \left( f\left( A \right) \right)\le f\left( \Phi \left( A \right) \right)\le \mu \left( m,M,f \right)\Phi \left( f\left( A \right) \right),
\end{equation*}
where $\mu\left( m,M,f \right)$ is defined by
\[\mu\left( m,M,f \right)\equiv \max \left\{ \frac{1}{f\left( t \right)}\left( \frac{M-t}{M-m}f\left( m \right)+\frac{t-m}{M-m}f\left( M \right) \right):\text{ }m\le t\le M \right\}.\]
}

In Section \ref{s2} we prove an analogue of Theorem C for log-convex functions. The proof of Theorem \ref{th1} follows quickly from this inequality. In Section \ref{s3}, inspired by the work of Lin \cite{lin}, we square the second inequality in \eqref{10}.

\section{\bf A refinement of the operator Kantorovich inequality}\label{s2}
\vskip 0.4 true cm
An important role in our analysis is played by the following result, which is of independent interest. 
\begin{proposition}\label{thb}
Let all the assumptions of Theorem {\upshape C} hold except the condition convexity which is changed to log-convexity. Then
\begin{equation}\label{1}
\Phi \left( f\left( A \right) \right)\le \Phi \left( {{\left[ f\left( m \right) \right]}^{\frac{M{{\mathbf{1}}_{\mathcal{H}}}-A}{M-m}}}{{\left[ f\left( M \right) \right]}^{\frac{A-m{{\mathbf{1}}_{\mathcal{H}}}}{M-m}}} \right)\le \mu \left( m,M,f \right)f\left( \Phi \left( A \right) \right).
\end{equation}
\end{proposition}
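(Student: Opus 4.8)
The plan is to reduce the whole statement to two scalar inequalities valid on all of $[m,M]$ and then transport them through the continuous functional calculus and through the positive linear map $\Phi$, using the key fact that \emph{affine} functions of $A$ commute with $\Phi$.

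\textbf{Left inequality.} For $t\in[m,M]$ write $t=(1-v)m+vM$ with $v=\frac{t-m}{M-m}$, so that $1-v=\frac{M-t}{M-m}$. The log-convexity of $f$ (the first inequality in \eqref{b}) gives the pointwise bound
\[
f(t)\le [f(m)]^{\frac{M-t}{M-m}}\,[f(M)]^{\frac{t-m}{M-m}}=:g(t),\qquad t\in[m,M].
\]
Since $f(m),f(M)>0$, the function $g$ is genuinely continuous on $[m,M]$, and $g(A)$ is exactly the operator in the middle of \eqref{1}. Because $f\le g$ on $Sp(A)\subseteq[m,M]$, monotonicity of the continuous functional calculus yields $f(A)\le g(A)$, and applying the positive linear map $\Phi$ preserves this order: $\Phi(f(A))\le\Phi(g(A))$.

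\textbf{Right inequality.} The point is that $g$ is dominated by the affine interpolant of $f$ at the endpoints. By the weighted arithmetic--geometric mean inequality (the second inequality in \eqref{b}),
\[
g(t)\le \ell(t):=\frac{M-t}{M-m}f(m)+\frac{t-m}{M-m}f(M),\qquad t\in[m,M],
\]
so $g(A)\le\ell(A)$ and hence $\Phi(g(A))\le\Phi(\ell(A))$. Now $\ell$ is affine, say $\ell(t)=\alpha t+\beta$; since $\Phi$ is linear and normalized, $\Phi(\ell(A))=\alpha\Phi(A)+\beta\,{{\mathbf{1}}_{\mathcal{K}}}=\ell(\Phi(A))$. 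Finally, the very definition of $\mu(m,M,f)$ says precisely that $\ell(t)\le\mu(m,M,f)\,f(t)$ for all $t\in[m,M]$; observing that $m{{\mathbf{1}}_{\mathcal{H}}}\le A\le M{{\mathbf{1}}_{\mathcal{H}}}$ forces (by positivity and normalization of $\Phi$) $Sp(\Phi(A))\subseteq[m,M]$, the functional calculus applied to $\Phi(A)$ turns this scalar inequality into $\ell(\Phi(A))\le\mu(m,M,f)\,f(\Phi(A))$. Chaining the displayed operator inequalities $\Phi(f(A))\le\Phi(g(A))\le\ell(\Phi(A))\le\mu(m,M,f)\,f(\Phi(A))$ gives \eqref{1}.

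I do not anticipate a serious obstacle here; the only points that require care are (i) verifying that each scalar estimate really holds on \emph{all} of $[m,M]$, so that the functional calculus may legitimately be applied to $A$ in the first step and to $\Phi(A)$ in the last step, and (ii) the structural observation that although the middle term $g(A)$ is not itself affine in $A$ — so $\Phi$ cannot be pulled through it directly — it is sandwiched below the affine operator $\ell(A)$, and it is this affine majorant, rather than $g$, that is compatible with moving $\Phi$ inside. Everything else is routine order-preservation for normalized positive linear maps. Theorem~\ref{th1} then follows by specializing to $f(t)=t^{-1}$, which is log-convex on $(0,\infty)$, and computing $\mu(m,M,f)=\frac{(M+m)^2}{4Mm}$.
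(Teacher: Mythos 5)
Your argument is correct, and it matches the paper's proof for the first two links in the chain: the scalar estimate $f(t)\le [f(m)]^{\frac{M-t}{M-m}}[f(M)]^{\frac{t-m}{M-m}}\le L(t)$ from \eqref{b}, then functional calculus on $A$, then positivity of $\Phi$. Where you diverge is the final step $\Phi(L(A))\le \mu(m,M,f)\,f(\Phi(A))$: the paper does not prove this but simply invokes \cite[Corollary 4.12]{micic} (with the functions $f$ and $g$ there taken to be $L$ and $f$), adding the remark that the cited matrix result also holds for operators. You instead carry out the Mond--Pe\v cari\'c step by hand: since the interpolant $\ell=L$ is affine and $\Phi$ is linear and normalized, $\Phi(L(A))=L(\Phi(A))$; since $m{{\mathbf{1}}_{\mathcal{H}}}\le A\le M{{\mathbf{1}}_{\mathcal{H}}}$ gives $m{{\mathbf{1}}_{\mathcal{K}}}\le \Phi(A)\le M{{\mathbf{1}}_{\mathcal{K}}}$, the scalar bound $L(t)\le \mu(m,M,f)f(t)$ on $[m,M]$ (which is exactly the definition of $\mu$, using $f>0$ as log-convexity requires) can be applied to $\Phi(A)$ via functional calculus. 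This buys you a self-contained, elementary proof that sidesteps both the external citation and the matrices-versus-operators caveat, at the cost of a few extra lines; the paper's route is shorter but rests on the reader accepting the quoted corollary and its extension to infinite dimensions. The only point worth flagging (it affects the paper's version equally) is that one tacitly uses continuity of $f$ on $[m,M]$ so that $f(\Phi(A))$ and the maximum defining $\mu$ make sense; log-convexity guarantees this on the interior, and the standing hypotheses of Theorem C are understood to cover it.
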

\begin{proof}
It can be verified that if $m\le t\le M$, then $0\le \frac{M-t}{M-m},\frac{t-m}{M-m}\le 1$ and $\frac{M-t}{M-m}+\frac{t-m}{M-m}=1$. Thanks to \eqref{b}, we have 
\begin{equation}\label{c}
f\left( t \right)=f\left( \frac{M-t}{M-m}m+\frac{t-m}{M-m}M \right)\le {{\left[ f\left( m \right) \right]}^{\frac{M-t}{M-m}}}{{\left[ f\left( M \right) \right]}^{\frac{t-m}{M-m}}}\le L\left( t \right),
\end{equation}
where
\[L\left( t \right)=\frac{M-t}{M-m}f\left( m \right)+\frac{t-m}{M-m}f\left( M \right).\]
Applying functional calculus for the operator $A$, we infer that
\[f\left( A \right)\le {{\left[ f\left( m \right) \right]}^{\frac{M{{\mathbf{1}}_{\mathcal{H}}}-A}{M-m}}}{{\left[ f\left( M \right) \right]}^{\frac{A-m{{\mathbf{1}}_{\mathcal{H}}}}{M-m}}}\le L\left( A \right).\]
Using  the  hypotheses  made  about $\Phi $,
\begin{equation}\label{3}
\Phi \left( f\left( A \right) \right)\le \Phi \left( {{\left[ f\left( m \right) \right]}^{\frac{M{{\mathbf{1}}_{\mathcal{H}}}-A}{M-m}}}{{\left[ f\left( M \right) \right]}^{\frac{A-m{{\mathbf{1}}_{\mathcal{H}}}}{M-m}}} \right)\le \Phi \left( L\left( A \right) \right).
\end{equation}
On account of \cite[Corollary 4.12]{micic} (the functions $f$ and $g$ there are now $L$ and $f$, respectively), we get
\[\Phi \left( f\left( A \right) \right)\le \Phi \left( {{\left[ f\left( m \right) \right]}^{\frac{M{{\mathbf{1}}_{\mathcal{H}}}-A}{M-m}}}{{\left[ f\left( M \right) \right]}^{\frac{A-m{{\mathbf{1}}_{\mathcal{H}}}}{M-m}}} \right)\le \mu \left( m,M,f \right)f\left( \Phi \left( A \right) \right).\]
Notice that, although \cite[Corollary 4.12]{micic} is for matrices, it is also true for operators.

Hence \eqref{1} follows.
\end{proof}

\medskip

The following follows immediately from Proposition \ref{thb}. Recall that $f\left( t \right)={{t}^{p}}$, $\left( p<0 \right)$ is log-convex function.
\begin{corollary}\label{3.1}
Under the hypotheses of Proposition \ref{thb}, let $p\in \left( -\infty ,0 \right)$ and $0<m<M$. Then 
\begin{equation}\label{18}
\Phi \left( {{A}^{p}} \right)\le \Phi \left( {{m}^{p\left( \frac{M{{\mathbf{1}}_{\mathcal{H}}}-A}{M-m} \right)}}{{M}^{p\left( \frac{A-m{{\mathbf{1}}_{\mathcal{H}}}}{M-m} \right)}} \right)\le K\left( m,M,p \right)\Phi {{\left( A \right)}^{p}},
\end{equation}
where $K\left( m,M,p \right)$ is the generalized Kantorovich constant defined by
\[K\left( m,M,p \right)\equiv \frac{m{{M}^{p}}-M{{m}^{p}}}{\left( p-1 \right)\left( M-m \right)}{{\left( \frac{p-1}{p}\frac{{{M}^{p}}-{{m}^{p}}}{m{{M}^{p}}-M{{m}^{p}}} \right)}^{p}}.\]
\end{corollary}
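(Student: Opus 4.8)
The plan is to obtain Corollary \ref{3.1} as a direct specialization of Proposition \ref{thb} to the function $f(t)=t^{p}$ with $p<0$, which is log-convex on $(0,\infty)$ since $\log f(t)=p\log t$ is convex there. First I would record that under the hypothesis $0<m<M$ the spectrum of the positive invertible operator $A$ lies in $[m,M]\subseteq(0,\infty)$, so $f$ is indeed log-convex on an interval containing $Sp(A)$ and Proposition \ref{thb} applies verbatim. Substituting $f(m)=m^{p}$ and $f(M)=M^{p}$ into \eqref{1}, the middle term becomes
\[
\Phi\!\left( \left(m^{p}\right)^{\frac{M{{\mathbf{1}}_{\mathcal{H}}}-A}{M-m}} \left(M^{p}\right)^{\frac{A-m{{\mathbf{1}}_{\mathcal{H}}}}{M-m}} \right)
=\Phi\!\left( {{m}^{p\left( \frac{M{{\mathbf{1}}_{\mathcal{H}}}-A}{M-m} \right)}}{{M}^{p\left( \frac{A-m{{\mathbf{1}}_{\mathcal{H}}}}{M-m} \right)}} \right),
\]
where I use that $m,M$ are positive scalars and the functional calculus rule $(c^{s})^{t}=c^{st}$ for $c>0$. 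This immediately gives the left-hand and middle terms of \eqref{18}, and produces the bound $\mu(m,M,f)\,\Phi(A)^{p}$ on the right.

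The only genuine work is to identify $\mu\left(m,M,t^{p}\right)$ with the generalized Kantorovich constant $K(m,M,p)$. By definition,
\[
\mu\left(m,M,t^{p}\right)=\max_{m\le t\le M}\frac{1}{t^{p}}\left( \frac{M-t}{M-m}m^{p}+\frac{t-m}{M-m}M^{p} \right).
\]
I would set $\psi(t)=t^{-p}\bigl(\alpha - \beta t\bigr)$ on $[m,M]$, where $\alpha=\frac{Mm^{p}-mM^{p}}{M-m}$ and $\beta=\frac{m^{p}-M^{p}}{M-m}$ (both quantities are positive because $p<0$ forces $m^{p}>M^{p}$), differentiate, and solve $\psi'(t)=0$. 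A short computation gives the unique interior critical point
\[
t_{0}=\frac{-p}{1-p}\cdot\frac{\alpha}{\beta}=\frac{p-1}{p}\cdot\frac{M^{p}-m^{p}}{mM^{p}-Mm^{p}},
\]
and one checks it lies in $(m,M)$ and is a maximum. Plugging $t_{0}$ back into $\psi$ and simplifying yields exactly
\[
\psi(t_{0})=\frac{mM^{p}-Mm^{p}}{(p-1)(M-m)}\left( \frac{p-1}{p}\cdot\frac{M^{p}-m^{p}}{mM^{p}-Mm^{p}} \right)^{p}=K(m,M,p),
\]
matching the stated formula. This evaluation of $\mu$ is the one step requiring care: keeping track of signs when $p<0$ (so that the maximand is genuinely concave-like and the critical point is the maximizer rather than a minimizer), and performing the algebra that collapses the expression at $t_{0}$ into the compact product form.

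I expect the optimization-and-simplification of $\mu\left(m,M,t^{p}\right)=K(m,M,p)$ to be the main obstacle — not conceptually deep, but the one place where a sign error derails the argument. In fact this identity is classical (it is the standard derivation of the generalized Kantorovich constant, see Furuta's work referenced via \cite{micic}), so in the write-up I would either cite it or relegate the calculus to a one-line remark, and otherwise present the corollary as the immediate consequence of Proposition \ref{thb} that it is.
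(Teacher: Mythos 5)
Your route is essentially the paper's: the paper proves the corollary in one line by applying Proposition \ref{thb} to $f(t)=t^{p}$ (log-convex for $p<0$) and taking the classical identity $\mu\left(m,M,t^{p}\right)=K\left(m,M,p\right)$ for granted, whereas you spell out that optimization. The spelled-out part, however, contains exactly the reciprocal slip you warned against: your first expression $t_{0}=\frac{-p}{1-p}\cdot\frac{\alpha}{\beta}$ is correct, but the asserted simplification $t_{0}=\frac{p-1}{p}\cdot\frac{M^{p}-m^{p}}{mM^{p}-Mm^{p}}$ is its reciprocal. Indeed, with $\alpha=\frac{Mm^{p}-mM^{p}}{M-m}$ and $\beta=\frac{m^{p}-M^{p}}{M-m}$ one gets $t_{0}=\frac{p}{p-1}\cdot\frac{mM^{p}-Mm^{p}}{M^{p}-m^{p}}$; the quantity $\frac{p-1}{p}\cdot\frac{M^{p}-m^{p}}{mM^{p}-Mm^{p}}$ appearing inside $K(m,M,p)$ is $t_{0}^{-1}$, not $t_{0}$. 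A quick sanity check with $p=-1$, $m=1$, $M=2$: the true maximizer is $t_{0}=\tfrac{3}{2}$ and $\psi(t_{0})=\tfrac{9}{8}=K(1,2,-1)$, while your displayed formula gives $\tfrac{2}{3}\notin[1,2]$, so the claim that it lies in $(m,M)$ would already fail. With the corrected $t_{0}$ the evaluation does collapse to $K(m,M,p)$, since at the critical point $L(t_{0})=\frac{1}{1-p}\cdot\frac{Mm^{p}-mM^{p}}{M-m}$ and $\psi(t_{0})=L(t_{0})\,t_{0}^{-p}$, which is exactly the stated constant; so the proof is fine once that one display is fixed (or, as you suggest, once the identity $\mu\left(m,M,t^{p}\right)=K\left(m,M,p\right)$ is simply cited, which is what the paper implicitly does).
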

\begin{remark}
We would like to mention that \eqref{1} can be regarded as an improvement of \cite[Theorem 1.5]{f} (see also \cite[Lemma 2]{micic1}).
\end{remark}

\medskip

After the previous technical intermission, we return to the main subject of this section,
the proof of the inequality \eqref{10}.

\medskip

\noindent {\it Proof of Theorem \ref{th1}.} This follows from Corollary \ref{3.1} by putting $p=-1$. We should point out that $K\left( m,M,-1 \right)=\frac{{{\left( M+m \right)}^{2}}}{4Mm}$. $\hfill\square$

\vskip 0.5 true cm

Can the second inequality in \eqref{10} be squared? Responding to this question is the main motivation of the next section.
\section{\bf Squaring refinement of the operator Kantorovich inequality}\label{s3}
\vskip 0.4 true cm
We will need the following lemmas.
\begin{lemma}\label{11}
		\hfill
\begin{itemize}
	\item[(i)] \cite[Theorem 1]{bhatia} Let $A,B>0$. Then the following norm inequality holds:
		\[\left\| AB \right\|\le \frac{1}{4}{{\left\| A+B \right\|}^{2}}.\] 
	\item[(ii)] \cite[Theorem 3]{ando} Let $A,B\ge 0$ and $1\le r\le \infty $. Then
	\[\left\| {{A}^{r}}+{{B}^{r}} \right\|\le \left\| {{\left( A+B \right)}^{r}} \right\|.\]
\end{itemize}
\end{lemma}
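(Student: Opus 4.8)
Both parts are quoted verbatim from \cite{bhatia} and \cite{ando}, so strictly the lemma is proved by those citations; the plan below is how I would reconstruct each bound to keep the note self-contained. I take the two items in increasing order of difficulty, starting with (ii).

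For (ii) I would argue by normalization. Set $\lambda=\|A+B\|=\lambda_{\max}(A+B)$ (the case $\lambda=0$ being trivial), so that $A+B\le\lambda\mathbf 1_{\mathcal H}$; since $A,B\ge0$ this forces $0\le A\le\lambda\mathbf 1_{\mathcal H}$ and $0\le B\le\lambda\mathbf 1_{\mathcal H}$, because $A\le A+B$ and $B\le A+B$. Dividing $A$ and $B$ by $\lambda$ reduces matters to the case $A+B\le\mathbf 1_{\mathcal H}$ with $0\le A,B\le\mathbf 1_{\mathcal H}$. The only nontrivial ingredient is the scalar inequality $t^{r}\le t$, valid for $t\in[0,1]$ and $r\ge1$; fed into the continuous functional calculus (legitimate since $Sp(A),Sp(B)\subseteq[0,1]$) it gives $A^{r}\le A$ and $B^{r}\le B$. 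Summing and using $A+B\le\mathbf 1_{\mathcal H}$ yields $A^{r}+B^{r}\le\mathbf 1_{\mathcal H}$, that is $\|A^{r}+B^{r}\|\le1$; undoing the scaling gives $\|A^{r}+B^{r}\|\le\lambda^{r}=\|A+B\|^{r}=\|(A+B)^{r}\|$. The case $r=\infty$ follows by letting $r\to\infty$. I expect no serious obstacle here; the one point to watch is that $t\mapsto t^{r}$ is \emph{not} operator monotone for $r>1$, so $A^{r}\le A$ must be read as a functional-calculus comparison of two functions of the single operator $A$, where it is valid, and not deduced from $A\le\mathbf 1_{\mathcal H}$.

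For (i) this elementary route is unavailable, and here lies the real work. I would first reduce the norm statement to the singular value inequality
\[s_{j}(AB)\le\tfrac14\,s_{j}\big((A+B)^{2}\big),\qquad j=1,2,\dots,\]
($s_{j}$ denoting the singular values arranged in decreasing order), whose $j=1$ case is exactly $\|AB\|\le\tfrac14\|(A+B)^{2}\|=\tfrac14\|A+B\|^{2}$, the last equality holding because $A+B\ge0$. To establish the singular value inequality I would call on the matrix arithmetic--geometric mean inequality $2\,s_{j}(A^{1/2}B^{1/2})\le s_{j}(A+B)$ together with the standard $2\times2$ block-operator identities comparing the singular values of $AB$ and $BA$ with those of $(A+B)^{2}$.

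The main obstacle is the passage from a geometric-mean bound to the full operator norm. The arithmetic--geometric mean inequality by itself only gives $\|A^{1/2}B^{1/2}\|\le\tfrac12\|A+B\|$, and since $\|A^{1/2}B^{1/2}\|^{2}=\|A^{1/2}BA^{1/2}\|$ is the \emph{spectral radius} of $AB$ rather than $\|AB\|$, this yields merely $\operatorname{spr}(AB)\le\tfrac14\|A+B\|^{2}$, which is strictly weaker than the asserted bound. Applying the same inequality to the pair $(A^{2},B^{2})$ instead only gives $\|AB\|\le\tfrac12\|A^{2}+B^{2}\|$, and a small example shows that $\tfrac12\|A^{2}+B^{2}\|$ can exceed $\tfrac14\|A+B\|^{2}$, so this too falls short. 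Closing this gap is the crux; it is exactly what the sharper singular value inequality accomplishes, and I would either reproduce in full the block-operator argument of \cite{bhatia} or, since the result is already in print, simply cite it.
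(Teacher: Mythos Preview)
The paper offers no proof of this lemma beyond the two citations, and your opening sentence already recognizes that; in this sense your proposal matches the paper exactly. Your supplementary reconstruction of (ii)---normalize so that $\|A+B\|=1$, use $A,B\le A+B\le\mathbf 1_{\mathcal H}$ to place $Sp(A),Sp(B)\subseteq[0,1]$, apply the scalar inequality $t^{r}\le t$ on $[0,1]$ via functional calculus to get $A^{r}\le A$ and $B^{r}\le B$, then sum---is correct and is the standard short argument for the operator-norm case of the Ando--Zhan inequality; your caveat that this last step compares two functions of a single operator and does not rely on operator monotonicity of $t\mapsto t^{r}$ is well taken. For (i) you correctly diagnose why the naive arithmetic--geometric mean route falls short (it bounds only $\operatorname{spr}(AB)$, not $\|AB\|$) and that the genuine content lies in the singular-value inequality of \cite{bhatia}; since the paper itself makes no attempt at a self-contained argument here, deferring to the citation is precisely what is done.
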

\begin{lemma}\label{9}
For each $m\le t\le M$, we have
\[t+mM{{m}^{\frac{t-M}{M-m}}}{{M}^{\frac{m-t}{M-m}}}\le M+m.\] 
\end{lemma}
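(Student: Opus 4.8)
The plan is to reduce this scalar inequality to the weighted arithmetic--geometric mean inequality already recorded in the right-hand estimate of \eqref{b}. The first step is to recognize the factor $m^{\frac{t-M}{M-m}}M^{\frac{m-t}{M-m}}$ as a geometric interpolation of the endpoint values of the log-convex function $f(s)=s^{-1}$: since $\frac{t-M}{M-m}=-\frac{M-t}{M-m}$ and $\frac{m-t}{M-m}=-\frac{t-m}{M-m}$, one has
\[
m^{\frac{t-M}{M-m}}M^{\frac{m-t}{M-m}}=\left(\frac{1}{m}\right)^{\frac{M-t}{M-m}}\left(\frac{1}{M}\right)^{\frac{t-m}{M-m}}=\left[f(m)\right]^{\frac{M-t}{M-m}}\left[f(M)\right]^{\frac{t-m}{M-m}}.
\]

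Next I would apply the second (AM--GM) inequality in \eqref{b} with $a=m$, $b=M$, $v=\frac{t-m}{M-m}\in[0,1]$, and $f(s)=s^{-1}$, which is log-convex; this gives
\[
m^{\frac{t-M}{M-m}}M^{\frac{m-t}{M-m}}\le\frac{M-t}{M-m}\cdot\frac{1}{m}+\frac{t-m}{M-m}\cdot\frac{1}{M}.
\]
Multiplying through by $mM>0$ --- the one place the hypothesis $0<m<M$ enters --- yields
\[
mM\,m^{\frac{t-M}{M-m}}M^{\frac{m-t}{M-m}}\le\frac{(M-t)M+(t-m)m}{M-m}.
\]

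The final step is a routine simplification of the right-hand side: $(M-t)M+(t-m)m=(M^{2}-m^{2})-t(M-m)=(M-m)\bigl((M+m)-t\bigr)$, so the bound reads $mM\,m^{\frac{t-M}{M-m}}M^{\frac{m-t}{M-m}}\le(M+m)-t$, and adding $t$ to both sides gives the assertion. I do not expect any genuine obstacle here: once the middle factor is identified as a geometric mean, the whole argument is a single application of AM--GM followed by elementary algebra, and the only point to watch is that multiplication by the positive scalar $mM$ preserves the inequality. As a sanity check, equality holds at $t=m$ and at $t=M$, so the estimate is sharp at the endpoints.
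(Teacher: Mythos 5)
Your proof is correct and is essentially the same argument as the paper's: a single application of the weighted arithmetic--geometric mean inequality followed by elementary algebra. The only (cosmetic) difference is the order of operations --- you apply AM--GM to $1/m$ and $1/M$ and then multiply by $mM$, whereas the paper first absorbs $mM$ into the powers, rewriting $mM\,m^{\frac{t-M}{M-m}}M^{\frac{m-t}{M-m}}=m^{\frac{t-m}{M-m}}M^{\frac{M-t}{M-m}}$, and then applies AM--GM directly to $m$ and $M$.
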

\begin{proof}
Because of the weighted arithmetic-geometric mean inequality
\[\begin{aligned}
 t+mM{{m}^{\frac{t-M}{M-m}}}{{M}^{\frac{m-t}{M-m}}}&=t+{{m}^{\frac{t-m}{M-m}}}{{M}^{\frac{M-t}{M-m}}} \\ 
& \le t+\frac{t-m}{M-m}m+\frac{M-t}{M-m}M \\ 
& =M+m,  
\end{aligned}\]
which finishes the proof.
\end{proof}

\medskip

Now we are at the position to state our main result.
\begin{theorem}\label{th3}
Let all the assumptions of Theorem {\upshape A} hold. Then
\begin{equation}\label{12}
\Phi {{\left( {{m}^{\frac{A-M{{\mathbf{1}}_{\mathcal{H}}}}{M-m}}}{{M}^{\frac{m{{\mathbf{1}}_{\mathcal{H}}}-A}{M-m}}} \right)}^{p}}\le {{\left( \frac{{{\left( M+m \right)}^{2}}}{{{4}^{\frac{2}{p}}}Mm} \right)}^{p}}\Phi {{\left( A \right)}^{-p}}\quad\text{ for }2\le p<\infty .
\end{equation}
In particular
\[\Phi {{\left( {{m}^{\frac{A-M{{\mathbf{1}}_{\mathcal{H}}}}{M-m}}}{{M}^{\frac{m{{\mathbf{1}}_{\mathcal{H}}}-A}{M-m}}} \right)}^{2}}\le {{\left( \frac{{{\left( M+m \right)}^{2}}}{4Mm} \right)}^{2}}\Phi {{\left( A \right)}^{-2}}.\]
\end{theorem}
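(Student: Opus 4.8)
The plan is to convert the operator inequality \eqref{12} into a norm inequality and then apply Lemmas \ref{11} and \ref{9}. Throughout, abbreviate $B:={{m}^{\frac{A-M{{\mathbf{1}}_{\mathcal{H}}}}{M-m}}}{{M}^{\frac{m{{\mathbf{1}}_{\mathcal{H}}}-A}{M-m}}}$, which is a well-defined positive operator since $Sp\left( A \right)\subseteq \left[ m,M \right]\subset \left( 0,\infty \right)$; moreover $\Phi \left( A \right)\ge m{{\mathbf{1}}_{\mathcal{K}}}$ and $\Phi \left( B \right)\ge \frac{1}{M}{{\mathbf{1}}_{\mathcal{K}}}$ are positive invertible because $\Phi$ is normalized and positive. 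Writing $\lambda =\frac{{{\left( M+m \right)}^{2}}}{{{4}^{2/p}}Mm}$, the desired inequality $\Phi {{\left( B \right)}^{p}}\le {{\lambda }^{p}}\Phi {{\left( A \right)}^{-p}}$ is equivalent, by congruence with $\Phi {{\left( A \right)}^{p/2}}$, to $\Phi {{\left( A \right)}^{p/2}}\Phi {{\left( B \right)}^{p}}\Phi {{\left( A \right)}^{p/2}}\le {{\lambda }^{p}}{{\mathbf{1}}_{\mathcal{K}}}$, hence to $\left\| \Phi {{\left( A \right)}^{p/2}}\Phi {{\left( B \right)}^{p}}\Phi {{\left( A \right)}^{p/2}} \right\|\le {{\lambda }^{p}}$, and finally (using $\left\| {{Z}^{*}}Z \right\|={{\left\| Z \right\|}^{2}}$ with $Z=\Phi {{\left( B \right)}^{p/2}}\Phi {{\left( A \right)}^{p/2}}$) to
\[\left\| \Phi {{\left( B \right)}^{p/2}}\Phi {{\left( A \right)}^{p/2}} \right\|\le {{\lambda }^{p/2}}=\frac{{{\left( M+m \right)}^{p}}}{4{{\left( Mm \right)}^{p/2}}},\]
the last equality because ${{\left( {{4}^{2/p}} \right)}^{p/2}}=4$. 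So it suffices to establish this norm bound.

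Next, I would push Lemma \ref{9} through the functional calculus. Since $t+mM\,{{m}^{\frac{t-M}{M-m}}}{{M}^{\frac{m-t}{M-m}}}\le M+m$ for every $t\in \left[ m,M \right]\supseteq Sp\left( A \right)$, we obtain $A+mM\,B\le \left( M+m \right){{\mathbf{1}}_{\mathcal{H}}}$, and applying the normalized positive linear map $\Phi$ gives $\Phi \left( A \right)+mM\,\Phi \left( B \right)\le \left( M+m \right){{\mathbf{1}}_{\mathcal{K}}}$. In particular the positive operator $mM\,\Phi \left( B \right)+\Phi \left( A \right)$ has operator norm at most $M+m$.

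With $r:=p/2\ge 1$, the estimate is now immediate from Lemma \ref{11}: applying part (i) to the positive invertible operators ${{\left( mM\,\Phi \left( B \right) \right)}^{r}}$ and $\Phi {{\left( A \right)}^{r}}$, then part (ii) with exponent $r$, and finally the bound just obtained,
\begin{align*}
{{\left( mM \right)}^{r}}\left\| \Phi {{\left( B \right)}^{r}}\Phi {{\left( A \right)}^{r}} \right\|
&=\left\| {{\left( mM\,\Phi \left( B \right) \right)}^{r}}\Phi {{\left( A \right)}^{r}} \right\|
\le \tfrac14{{\left\| {{\left( mM\,\Phi \left( B \right) \right)}^{r}}+\Phi {{\left( A \right)}^{r}} \right\|}^{2}}\\
&\le \tfrac14{{\left\| mM\,\Phi \left( B \right)+\Phi \left( A \right) \right\|}^{2r}}
\le \tfrac14{{\left( M+m \right)}^{2r}}.
\end{align*}
Dividing by ${{\left( mM \right)}^{r}}$ and recalling $2r=p$ yields $\left\| \Phi {{\left( B \right)}^{p/2}}\Phi {{\left( A \right)}^{p/2}} \right\|\le \frac{{{\left( M+m \right)}^{p}}}{4{{\left( Mm \right)}^{p/2}}}$, which is exactly the bound isolated above; this proves \eqref{12}, and the displayed ``in particular'' case is $p=2$.

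The only delicate step is the reduction in the first paragraph from the operator inequality $\Phi {{\left( B \right)}^{p}}\le {{\lambda }^{p}}\Phi {{\left( A \right)}^{-p}}$ to the norm inequality for $\left\| \Phi {{\left( B \right)}^{p/2}}\Phi {{\left( A \right)}^{p/2}} \right\|$; everything after that is bookkeeping. This reduction works because $\Phi \left( A \right)$ is invertible and because conjugating by $\Phi {{\left( A \right)}^{p/2}}$ together with $\left\| {{Z}^{*}}Z \right\|={{\left\| Z \right\|}^{2}}$ lets us replace the (non-operator-monotone) $p$-th power by a genuine norm, which is precisely the setting in which Lemma \ref{11} — a norm statement, not an order statement — applies. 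One should also note that Lemma \ref{11}(ii) requires the exponent $r=p/2$ to be $\ge 1$, which is exactly the hypothesis $p\ge 2$; at $p=2$ that lemma is an equality and the argument reduces to the plain squaring of the second inequality in \eqref{10}.
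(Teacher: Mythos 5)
Your proposal is correct and follows essentially the same route as the paper: the same reduction of \eqref{12} to the norm bound $\bigl\| \Phi \left( B \right)^{p/2}\Phi \left( A \right)^{p/2} \bigr\|\le \frac{(M+m)^{p}}{4(Mm)^{p/2}}$, the same use of Lemma \ref{9} through the functional calculus and $\Phi$ to get $\Phi \left( A \right)+mM\Phi \left( B \right)\le (M+m)\mathbf{1}_{\mathcal{K}}$, and the same application of Lemma \ref{11}(i) followed by Lemma \ref{11}(ii) with $r=p/2\ge 1$. The only cosmetic difference is that you spell out the equivalence $X\le \alpha Y \Leftrightarrow \| X^{1/2}Y^{-1/2}\|\le \alpha^{1/2}$ via congruence and $\| Z^{*}Z\| =\| Z\|^{2}$, which the paper simply quotes.
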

\begin{proof}
The idea of the proof is similar to \cite[Theorem 3]{fu}. It is easy to see that if $A,B>0$ and $\alpha >0$, then
\[A\le \alpha B\quad\text{ }\Leftrightarrow \quad\text{ }\left\| {{A}^{\frac{1}{2}}}{{B}^{-\frac{1}{2}}} \right\|\le {{\alpha }^{\frac{1}{2}}}.\] 
So we are done if we can show 
\[\left\| \Phi {{\left( {{m}^{\frac{A-M{{\mathbf{1}}_{\mathcal{H}}}}{M-m}}}{{M}^{\frac{m{{\mathbf{1}}_{\mathcal{H}}}-A}{M-m}}} \right)}^{\frac{p}{2}}}\Phi {{\left( A \right)}^{\frac{p}{2}}} \right\|\le \frac{{{\left( M+m \right)}^{p}}}{4{{M}^{\frac{p}{2}}}{{m}^{\frac{p}{2}}}}.\]
On account of Lemma \ref{9}, it follows that
\begin{equation}\label{6}
\Phi \left( A \right)+mM\Phi \left( {{m}^{\frac{A-M{{\mathbf{1}}_{\mathcal{H}}}}{M-m}}}{{M}^{\frac{m{{\mathbf{1}}_{\mathcal{H}}}-A}{M-m}}} \right)\le \left( M+m \right){{\mathbf{1}}_{\mathcal{H}}}.
\end{equation}
By direct calculation,
\[\begin{aligned}
& \left\| {{m}^{\frac{p}{2}}}{{M}^{\frac{p}{2}}}\Phi {{\left( {{m}^{\frac{A-M{{\mathbf{1}}_{\mathcal{H}}}}{M-m}}}{{M}^{\frac{m{{\mathbf{1}}_{\mathcal{H}}}-A}{M-m}}} \right)}^{\frac{p}{2}}}\Phi {{\left( A \right)}^{\frac{p}{2}}} \right\| \\ 
&\quad \le \frac{1}{4}{{\left\| {{m}^{\frac{p}{2}}}{{M}^{\frac{p}{2}}}\Phi {{\left( {{m}^{\frac{A-M{{\mathbf{1}}_{\mathcal{H}}}}{M-m}}}{{M}^{\frac{m{{\mathbf{1}}_{\mathcal{H}}}-A}{M-m}}} \right)}^{\frac{p}{2}}}+\Phi {{\left( A \right)}^{\frac{p}{2}}} \right\|}^{2}} \quad \text{(by Lemma \ref{11} (i))}\\ 
&\quad \le \frac{1}{4}{{\left\| {{\left( mM\Phi \left( {{m}^{\frac{A-M{{\mathbf{1}}_{\mathcal{H}}}}{M-m}}}{{M}^{\frac{m{{\mathbf{1}}_{\mathcal{H}}}-A}{M-m}}} \right)+\Phi \left( A \right) \right)}^{\frac{p}{2}}} \right\|}^{2}} \quad \text{(by Lemma \ref{11} (ii))}\\ 
&\quad =\frac{1}{4}{{\left\| mM\Phi \left( {{m}^{\frac{A-M{{\mathbf{1}}_{\mathcal{H}}}}{M-m}}}{{M}^{\frac{m{{\mathbf{1}}_{\mathcal{H}}}-A}{M-m}}} \right)+\Phi \left( A \right) \right\|}^{p}} \\ 
&\quad \le \frac{{{\left( M+m \right)}^{p}}}{4} \quad \text{(by \eqref{6})}. \\ 
\end{aligned}\]
This proves \eqref{12}.
\end{proof}

\medskip

{\bf Acknowledgements.} We would like to thank the referee(s) for carefully reading our manuscript and for giving such constructive comments which substantially helped to improve the quality of the paper.
\bibliographystyle{alpha}

\vskip 0.4 true cm

\tiny$^{\dagger}$Young Researchers and Elite Club, Mashhad Branch, Islamic Azad University, Mashhad, Iran.

{\it E-mail address:} hrmoradi@mshdiau.ac.ir

\vskip 0.4 true cm

$^{\ddagger}$Department of Mathematics, Faculty of Arts and Sciences, Ad\i yaman University, Ad\i yaman,
Turkey.

{\it E-mail address:} igumus@adiyaman.edu.tr

\vskip 0.4 true cm

$^{\S}$Department of Mathematics, Mashhad Branch, Islamic Azad University, Mashhad, Iran.

{\it E-mail address:} zheydarbeygi@yahoo.com
\end{document}